\numberwithin{equation}{section}
\title[Perturbation analysis of Higham squared maximum growth matrices]{On a perturbation analysis of Higham squared maximum Gaussian elimination growth matrices}
\author{Alan Edelman}
\address{\scriptsize Department of Mathematics, Massachusetts Institute of Technology, Cambridge, MA, 02139 USA.
}
\email{edelman@mit.edu}
\author{John Urschel}
\email{urschel@mit.edu}
\author{Bowen Zhu}
\email{bowenzhu@g.harvard.edu}
\address{\scriptsize School of Engineering and Applied Sciences, Harvard University, Cambridge, MA, 02138 USA.
}
\subjclass[2020]{Primary 65F05, 15A23. \\ \indent This arXiv technical report is an abbreviated complement to the third author's Harvard master's thesis \cite{bowen24}.}
\newtheorem{theorem}{Theorem}[section]
\newtheorem{lemma}[theorem]{Lemma}
\newtheorem{proposition}[theorem]{Proposition}
\newtheorem{corollary}[theorem]{Corollary}
\begin{document}

\begin{abstract}

Gaussian elimination is the most popular technique for solving a dense linear system. Large errors in this procedure can occur in floating point arithmetic when the matrix's growth factor is large. In the study of numerical linear algebra, it is often valuable to study and characterize the worst case examples. To this end, in their 1989 paper, Higham and Higham characterized the complete set of real n by n matrices that achieves the maximum growth factor under partial pivoting. Left undone is a sensitivity analysis for these matrices under perturbations. The growth factor of these and nearby matrices is the subject of this work. Through theoretical insights and empirical results, we illustrate the high sensitivity of the growth factor of these matrices to perturbations and show how subtle changes can be strategically applied to matrix entries to significantly reduce the growth.
\end{abstract}

\maketitle

\section{Introduction to Partial Pivoting Growth}

It is well-known, even to beginning students in engineering and science, that
the most popular method for solving the dense linear system $Ax=b$ for $x$ 
is Gaussian elimination. Indeed, this method, employed using partial pivoting, is widely
available through interfaces from high level languages such as
Julia \cite{bezanson2017julia}, Mathematica \cite{Mathematica}, Matlab \cite{MATLAB}, Python NumPy \cite{harris2020array}, R
\cite{rmanual}, etc. Error estimates for the stability of the Gaussian elimination algorithm in floating point arithmetic are governed by the bits of precision used, the condition number of the matrix, and the growth factor (i.e., the largest magnitude entry encountered during the Gaussian elimination algorithm) \cite[Theorem 3.3.2]{golub2013matrix}. Many researchers have studied and continue to study the question of why Gaussian elimination with partial pivoting has been so very effective \cite{foster1994gaussian,higham2021random,higham1989large,huang2022average,sankar2006smoothed,spielman2004smoothed,trefethen2022numerical,trefethen1990average}. In contrast to complete pivoting, where the existence of matrices with even super-linear growth remains an open problem \cite{bisain2023new,edelman2024some,gould1991growth}, it has been known since Wilkinson's classic text {\it The Algebraic Eigenvalue Problem} \cite[p.212]{Wilkinson1965AEP} that, for partial pivoting, the growth factor is bounded above by $2^{n-1}$ and that this quantity can be achieved by the matrix
\begin{equation}\label{eqn:wilk}
A = \begin{pmatrix*}[r] 1 & 0\phantom{.} & \cdots & 0 & 1 \\ -1 & \ddots & \ddots & \vdots & \vdots \\
\vdots & \ddots & 1 &  0 & 1 \\
-1 & \cdots & -1 & 1 & 1 \\
-1 & \cdots & -1 & -1 & 1 \end{pmatrix*}.
\end{equation}
Much later, Higham and Higham (from now on denoted  {Higham}\footnote[2] 
{We denote ``Higham and Higham" as Higham$^2$, to be read as ``Higham squared," yet we realize
this has the appearance of a footnote, so for readers who saw it this way, we have included this footnote.
}) identified the complete set of $n$ by $n$ real matrices that achieve the maximal growth of $2^{n-1}$  \cite{higham1989large}. We call such matrices Higham$^2$ matrices (see Proposition \ref{prop} for a description). A scalar quantity of interest is the last pivot of a Higham$^2$ matrix, which is a differentiable function of the matrix entries.
We can therefore ask for the gradient of this last pivot or, even better, to have a full (non-infinitesimal) perturbation analysis of the last pivot (for Gaussian elimination without pivoting).  We provide such a perturbation analysis in Theorem \ref{theorem}. The last pivot is an ideal quantity to measure in order to understand the growth factor, as every entry of $U$ is the last pivot of the LU factorization of some submatrix of $A$. We observe that generically, large growth does not last very long in the sense that
often a small perturbation can dramatically reduce a large pivot. We have a mental image that the Higham$^2$ matrices 
live on a kind of ``ridge" that one can easily fall off of. This picture is consistent with the smoothed analysis of Sankar, Spielman, and Teng \cite{sankar2006smoothed}. The structure of the Higham$^2$ matrices provide an ideal setting to better understand the ridge and its profile. Perhaps unsurprisingly, not all directions of descent are created equal. We provide numerical experiments (in Section \ref{sec:experiments}) to visualize the effects of perturbing Higham$^2$ matrices and confirm the conclusions gleaned from the theoretical results of Section \ref{sec:theory}.

\section{Entrywise Perturbations \& Higham$^2$ Matrices}\label{sec:theory}

Here we provide mathematical estimates for the effects of entrywise perturbations on the last pivot of the LU factorization (without pivoting) of a matrix (Lemma \ref{lemma}), recall a characterization of Higham$^2$ matrices (Proposition \ref{prop}), and consider the effects of entrywise perturbations on this class (Theorem \ref{theorem}, Corollaries \ref{cor:cond} \& \ref{cor:topright}). These theoretical results give insight into the experimental observations in Section \ref{sec:experiments}.

\begin{lemma} \label{lemma}
Let
$$ A = \begin{pmatrix} \hat L & 0 \\  \bm{\ell}^T & 1 \end{pmatrix} \begin{pmatrix} \hat U & \bm{u} \\ 0 & p \end{pmatrix} \in \mathrm{GL}_n(\mathbb{R}),$$
where $\hat L \in \mathrm{SL}_{n-1}(\mathbb{R})$ is lower unitriangular, $\hat U \in \mathrm{GL}_{n-1}(\mathbb{R})$ is upper triangular, and $\bm{\ell},\bm{u} \in \mathbb{R}^{n-1}$. Then the LU factorization, if it exists, of $A + \epsilon \,  \bm{e}_i \bm{e}_j^T$, where $\bm{e}_i$ is the $i^{th}$ standard basis vector, has last pivot
\[ p^{(i,j)}_\epsilon = p + \epsilon \frac{(\hat U^{-1} \bm{u})_j(\bm{\ell}^T \hat L^{-1})_i}{1+ \epsilon (\hat L\hat U)^{-1}_{ji}} \qquad \text{for} \quad i,j < n,\]
$p^{(i,n)}_\epsilon = p  - \epsilon (\bm{\ell}^T \hat L^{-1})_i$ for $i<n$, $p^{(n,j)}_\epsilon = p - \epsilon (\hat U^{-1} \bm{u})_j$ for $j<n$, and $p^{(n,n)}_\epsilon = p+\epsilon$.
\end{lemma}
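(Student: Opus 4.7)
My plan is to interpret the last pivot as a ratio of determinants and then apply the matrix determinant lemma to the rank-one perturbation. Since $A = LU$ factors with the specified block structure, we have $\det A = p \det(\hat L \hat U)$, and more generally the last pivot of any nonsingular matrix $M \in \mathrm{GL}_n$ with an LU factorization equals $\det(M)/\det(\hat M)$, where $\hat M$ denotes the leading $(n-1)\times(n-1)$ principal submatrix. Applying this to $M = A + \epsilon \, \bm{e}_i \bm{e}_j^T$ gives
\[ p^{(i,j)}_\epsilon = \frac{\det(A + \epsilon \, \bm{e}_i \bm{e}_j^T)}{\det(\hat A + \epsilon \, \mathbf{1}[i,j<n]\, \bm{e}_i^{(n-1)}(\bm{e}_j^{(n-1)})^T)}, \]
where $\hat A = \hat L \hat U$ is the leading $(n-1)\times(n-1)$ block of $A$ and the perturbation only touches $\hat A$ when both indices are less than $n$.

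Next I would apply the matrix determinant lemma, $\det(M + \epsilon \, \bm{u} \bm{v}^T) = \det(M)(1 + \epsilon \, \bm{v}^T M^{-1} \bm{u})$, to both numerator and denominator, reducing everything to the entries $(A^{-1})_{ji}$ and $(\hat A^{-1})_{ji}$. The key computation is then an explicit block inversion: using $A^{-1} = U^{-1} L^{-1}$ with
\[ L^{-1} = \begin{pmatrix} \hat L^{-1} & 0 \\ -\bm{\ell}^T \hat L^{-1} & 1 \end{pmatrix}, \qquad U^{-1} = \begin{pmatrix} \hat U^{-1} & -\hat U^{-1}\bm{u}/p \\ 0 & 1/p \end{pmatrix}, \]
multiplication yields
\[ A^{-1} = \begin{pmatrix} \hat U^{-1}\hat L^{-1} + \tfrac{1}{p}\,\hat U^{-1}\bm{u}\, \bm{\ell}^T \hat L^{-1} & -\hat U^{-1}\bm{u}/p \\[2pt] -\bm{\ell}^T \hat L^{-1}/p & 1/p \end{pmatrix}. \]

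With this in hand I would split into the four index cases. For $i,j<n$, the numerator factor is $1+\epsilon (A^{-1})_{ji}$ and the denominator is $1+\epsilon(\hat A^{-1})_{ji}$; subtracting the two and using that the top-left block of $A^{-1}$ differs from $\hat A^{-1}$ by exactly $\tfrac{1}{p}\hat U^{-1}\bm{u}\,\bm{\ell}^T \hat L^{-1}$ produces the stated formula after simplification. For the three remaining cases ($i<n,j=n$; $i=n,j<n$; $i=j=n$), the denominator is just $\det \hat A$, so $p^{(i,j)}_\epsilon = p\bigl(1+\epsilon(A^{-1})_{ji}\bigr)$, and the claimed expressions fall out immediately from reading the appropriate entry of $A^{-1}$ off the block formula above.

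The only mild subtlety is bookkeeping: ensuring the indices $(j,i)$ versus $(i,j)$ are tracked correctly (since the matrix determinant lemma produces $\bm{v}^T M^{-1}\bm{u} = (M^{-1})_{ji}$), and verifying that invertibility of the perturbed leading minor, needed for the LU factorization to exist, is exactly the condition $1+\epsilon(\hat L\hat U)^{-1}_{ji}\neq 0$ appearing in the denominator. No step is genuinely hard; the main obstacle is simply packaging the block-inverse computation cleanly enough that all four formulas drop out with minimal case-by-case rewriting.
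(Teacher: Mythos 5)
Your proposal is correct and follows essentially the same route as the paper: expressing the last pivot as a ratio of determinants, applying the matrix determinant lemma to both numerator and denominator, computing $A^{-1}$ via the block factorization $U^{-1}L^{-1}$, and splitting into the four index cases. The simplification for $i,j<n$ using $A^{-1}_{ji} - (\hat L\hat U)^{-1}_{ji} = p^{-1}(\hat U^{-1}\bm{u})_j(\bm{\ell}^T\hat L^{-1})_i$ is exactly the step the paper carries out.
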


\begin{proof}
We first consider the case $i,j <n$. The last pivot is a ratio of determinants
$$p^{(i,j)}_\epsilon = \frac{\det (A+\epsilon \,  \bm{e}_i \bm{e}_j^T) }{\det (\hat L\hat U + \epsilon \,\bm{\hat e}_i  \bm{\hat e}_j^T )} = \frac{\det(A)}{\det(\hat L\hat U)} \, \bigg( \frac{1+\epsilon \bm{e}_j^T A^{-1} \bm{e}_i}{1+ \epsilon \bm{\hat e}_j^T (\hat L\hat U)^{-1} \bm{\hat e}_i}\bigg) = p \bigg( \frac{1+\epsilon A^{-1}_{ji}}{1+ \epsilon (\hat L\hat U)^{-1}_{ji}} \bigg),$$
where $\bm{\hat e}_i$ is the $i^{th}$ standard basis vector in $\mathbb{R}^{n-1}$. The matrix $A^{-1}$ has block form
$$ A^{-1} = \begin{pmatrix} \hat U^{-1} & -p^{-1} \hat U^{-1}\bm{u} \\ 0 & p^{-1} \end{pmatrix} \begin{pmatrix} \hat L^{-1} & 0 \\  -\bm{\ell}^T \hat L^{-1} & 1 \end{pmatrix} = \begin{pmatrix} (\hat L\hat U)^{-1} + p^{-1} \hat U^{-1} \bm{u} \bm{\ell}^T \hat L^{-1} & - p^{-1} \hat U^{-1} \bm{u} \\ -p^{-1} \bm{\ell}^T \hat L^{-1}  & p^{-1} \end{pmatrix},$$
and so $A^{-1}_{ji} = (\hat L\hat U)_{ji}^{-1}+p^{-1} (\hat U^{-1} \bm{u})_j ( \bm{\ell}^T \hat L^{-1})_i$. Therefore, 
$$ p^{(i,j)}_\epsilon - p = p \bigg[  \frac{1+\epsilon A^{-1}_{ji}}{1+ \epsilon (\hat L\hat U)^{-1}_{ji}} - 1 \bigg] = \frac{\epsilon p(A^{-1}_{ji}-(\hat L\hat U)^{-1}_{ji})}{1+ \epsilon (\hat L\hat U)^{-1}_{ji}} = \frac{\epsilon (\hat U^{-1} \bm{u})_j ( \bm{\ell}^T \hat L^{-1})_i } {1+ \epsilon (\hat L\hat U)^{-1}_{ji}}.$$
When $i = n$ or $j = n$, $\displaystyle{p^{(i,j)}_\epsilon =  \frac{\det (A+\epsilon \,  \bm{e}_i \bm{e}_j^T) }{\det (\hat L\hat U)} = p (1+\epsilon A^{-1}_{ji}).}$
Noting that $A_{jn}^{-1} = - p^{-1} (\hat U^{-1} \bm{u})_j$ for $j <n$, $A_{ni}^{-1} = - p^{-1} (\bm{\ell}^T \hat L^{-1})_i$ for $i <n$, and $A_{nn}^{-1} = p^{-1}$ completes the proof.
\end{proof}

We recall the following characterization of Higham$^2$ matrices from the original paper of Higham and Higham \cite{higham1989large}, where we have slightly adjusted the normalization and notation to suit our needs.

\begin{proposition}{\cite[Theorem 2.2]{higham1989large}}\label{prop}
Every matrix $A \in \mathrm{GL}_n(\mathbb{R})$, $\|A\|_{\max} = 1$, with growth factor under partial pivoting equal to $2^{n-1}$ is of the form
\begin{equation}\label{eq:higham_desc} 
D P A =  \begin{pmatrix} \hat L & 0 \\  - \bm{1}^T & 1 \end{pmatrix} \begin{pmatrix} \hat U & \bm{u} \\ 0 & 2^{n-1} \end{pmatrix} = \begin{pmatrix} \hat L\hat U & \bm{1} \\ - \bm{1}^T \hat U & 1 \end{pmatrix},
\end{equation}
where $D\in \mathrm{GL}_n(\mathbb{R})$ is a $\pm1$ diagonal matrix, $P \in \mathrm{GL}_n(\mathbb{R})$ is a permutation matrix associated with a partial pivoting of $A$, $\hat L \in \mathrm{GL}_{n-1}(\mathbb{R})$ is lower unitriangular with $\hat L_{ij} = -1$ for all $i>j$, $\bm{1}$, $\bm{u} = (1,2,...,2^{n-2})^T \in \mathbb{R}^{n-1}$, and $\hat U \in \mathrm{GL}_{n-1}(\mathbb{R})$ is upper triangular, with entries satisfying $\|\hat L\hat U\|_{\max} \le 1$ and $\|\bm{1}^T \hat U\|_\infty \le 1$. 
\end{proposition}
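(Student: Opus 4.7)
The plan is to combine the classical Wilkinson $2^{n-1}$ upper bound with a rigidity analysis of the equality case. Let $P$ be the permutation produced by running partial pivoting on $A$, so $PA=\tilde L\tilde U$ with $\tilde L$ lower unitriangular and $|\tilde L_{ij}|\le 1$ for $i>j$. I would first show, by induction on the identity $(\tilde L^{-1})_{ij}=-\sum_{k=j}^{i-1}\tilde L_{ik}(\tilde L^{-1})_{kj}$, that $|(\tilde L^{-1})_{ij}|\le 2^{i-j-1}$ for $i>j$. Writing $\tilde U=\tilde L^{-1}PA$ and using $\|A\|_{\max}=1$ then yields $|\tilde U_{ij}|\le 2^{i-1}$, which together with the classical per-step growth bound $|a_{ij}^{(k)}|\le 2^{k-1}$ gives $g(A)\le 2^{n-1}$ and, at equality, forces $|\tilde U_{nn}|=2^{n-1}$.

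The main obstacle is the equality analysis that extracts the rigid structure from $|\tilde U_{nn}|=2^{n-1}$. Expanding $\tilde U_{nn}=\sum_{k=1}^n(\tilde L^{-1})_{nk}(PA)_{kn}$ and noting $1+\sum_{k=1}^{n-1}2^{n-k-1}=2^{n-1}$, equality forces three simultaneous conditions: (i) $|(\tilde L^{-1})_{nk}|=2^{n-k-1}$ for every $k<n$; (ii) every entry of the last column of $PA$ has modulus one; and (iii) all summands $(\tilde L^{-1})_{nk}(PA)_{kn}$ are sign-aligned. Unpacking (i) through the recursion above by a second induction on the column offset should propagate $|\tilde L_{ij}|=1$ throughout the subdiagonal, along with a multiplicative sign-consistency condition on the $\tilde L_{ij}$. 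That consistency is exactly what guarantees the existence of a $\pm 1$ diagonal matrix $D$ such that $D\tilde L D^{-1}$ has every subdiagonal entry equal to $-1$; combined with (ii) and (iii), the same $D$ (up to global sign) normalizes the last column of $DPA$ to $\bm 1$ and makes the last pivot positive. The delicate point is that $|\tilde U_{nn}|\le 2^{n-1}$ only directly constrains the last row of $\tilde L^{-1}$, yet this propagation must pin down every subdiagonal entry of $\tilde L$ and a consistent global sign pattern.

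With $D$ fixed, the factorization of $DPA$ matches the form in \eqref{eq:higham_desc}, and it remains only to identify $\bm u$ and verify the two norm bounds. Running Gaussian elimination on the last column of $DPA$ starting from $(1,\ldots,1)^T$ and using the constant multiplier $-1$ at each step produces the doubling sequence $(1,2,4,\ldots,2^{n-1})^T$; reading off the top $n-1$ entries gives $\bm u=(1,2,\ldots,2^{n-2})^T$ and last pivot $2^{n-1}$. The bounds $\|\hat L\hat U\|_{\max}\le 1$ and $\|\bm 1^T\hat U\|_\infty\le 1$ are then immediate from identifying $\hat L\hat U$ as the top-left $(n-1)\times(n-1)$ block of $DPA$ and $-\bm 1^T\hat U$ as the first $n-1$ entries of its last row, since $D$ and $P$ both preserve the max norm and $\|A\|_{\max}=1$.
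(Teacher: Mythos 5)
The paper does not prove this proposition; it is stated verbatim (with minor renormalization) as a recollection of \cite[Theorem 2.2]{higham1989large}, so there is no in-paper proof to compare against. Taking your proposal as a self-contained plan, the overall strategy is sound and, as far as I can tell, mirrors the Higham--Higham argument in spirit: derive the bound $|(\tilde L^{-1})_{ij}|\le 2^{i-j-1}$, observe that only $a^{(n)}_{nn}=\tilde U_{nn}$ can attain $2^{n-1}$ (since the per-step bound is $2^{k-1}$), and then squeeze the triangle-inequality chain to force rigidity. The step you flag as delicate does work. Equality $|(\tilde L^{-1})_{n1}|=2^{n-2}$ alone, unrolled down the first column via $(\tilde L^{-1})_{k1}=-\sum_{m<k}\tilde L_{km}(\tilde L^{-1})_{m1}$, already forces $|\tilde L_{km}|=1$ for every $k>m$ together with the sign-alignment $\mathrm{sgn}(\tilde L_{km}(\tilde L^{-1})_{m1})=\mathrm{sgn}(\tilde L_{k1})$. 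Setting $s_1=1$ and $s_k=-\mathrm{sgn}(\tilde L_{k1})$, this gives $\tilde L_{km}=-s_ks_m$ for all $k>m$, which is exactly the multiplicative consistency you need: with $D=\mathrm{diag}(s)$, the conjugate $D\tilde L D^{-1}$ has every subdiagonal entry $-1$.

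Two details to nail down when you write this out. First, after fixing $D$, you must show that the last column of $DPA$ is $+\bm 1$ (not $-\bm 1$) and that the last pivot of $DPA$ is $+2^{n-1}$; your parenthetical ``up to global sign'' is the right fix, since $D\mapsto -D$ leaves $D\tilde L D^{-1}$ unchanged while flipping the last column and the pivot sign together. Second, the final-column computation should be stated more carefully than ``running Gaussian elimination on $(1,\dots,1)^T$'': what you actually need is $\hat L\bm u=\bm 1$ and $-\bm 1^T\bm u+2^{n-1}=1$, both of which check out directly for $\bm u=(1,2,\dots,2^{n-2})^T$, confirming the block factorization in \eqref{eq:higham_desc}. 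The two norm bounds then follow immediately from $\|DPA\|_{\max}=\|A\|_{\max}=1$ as you say.
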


Applying Lemma \ref{lemma} to Higham$^{2}$ matrices (described in Proposition \ref{prop}) gives the following theorem.

\begin{theorem}\label{theorem}
Let $A \in \mathrm{GL}_n(\mathbb{R})$, $\|A\|_{\max} = 1$, be a Higham$^2$ matrix of the form in Equation \ref{eq:higham_desc} with $P = D = I$. Then the LU factorization, if it exists, of $A + \epsilon \,  \bm{e}_i \bm{e}_j^T$ has last pivot
\begin{equation}\label{eq:higham}
p_{\epsilon}^{(i,j)} =  \begin{cases} \displaystyle{\frac{1}{2^{1-n} +\epsilon \, 2^{-i} \sum_{\ell=1}^{n-j} 2^{-\ell} \hat U_{j,n-\ell}^{-1}}} & \qquad \text{for} \quad i <j < n \\[3 ex]
 \displaystyle{\frac{1 + \tfrac{1}{2} \epsilon \, ( \hat U_{ji}^{-1} - \sum_{\ell = 1}^{i-j} 2^{-\ell} \hat U_{j,i-\ell}^{-1} )}{2^{1-n} + \epsilon (2^{1-n} \hat U_{ji}^{-1} +2^{-i} \sum_{\ell = 1}^{n-i-1} 2^{-\ell} \hat U_{j,n-\ell}^{-1} )}} & \qquad \text{for} \quad j \le i < n \\[1 ex]
\end{cases}\quad ,
\end{equation}
$p_{\epsilon}^{(i,n)} = 2^{n-1}(1+\epsilon \, 2^{-i})$ for $i <n$, $p_{\epsilon}^{(n,j)} = 2^{n-1}(1 - \epsilon \sum_{\ell = 1}^{n-j} 2^{-\ell} \hat U_{j,n-\ell}^{-1})$ for $j < n$, and $p_{\epsilon}^{(n,n)}= 2^{n-1} + \epsilon$.
\end{theorem}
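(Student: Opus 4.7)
The plan is to specialize Lemma \ref{lemma} to the Higham$^2$ structure from Proposition \ref{prop}. With $P=D=I$, Proposition \ref{prop} identifies $\bm{\ell}=-\bm{1}$, $p=2^{n-1}$, and $\bm{u}=(1,2,\ldots,2^{n-2})^T$, so the job reduces to evaluating $\bm{\ell}^T\hat L^{-1}$, $\hat U^{-1}\bm{u}$, and $(\hat L\hat U)^{-1}_{ji}$ in closed form (modulo the unspecified triangular matrix $\hat U$) and substituting everything into Lemma \ref{lemma}.

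First I would compute $\hat L^{-1}$ from the combinatorial structure $\hat L_{ii}=1$, $\hat L_{ij}=-1$ for $i>j$. Solving $\hat L x=\bm{e}_k$ gives $x_k=1$ and $x_j=\sum_{i=k}^{j-1}x_i$ for $j>k$, whose unique solution is $x_j=2^{j-k-1}$, so
\[
\hat L^{-1}_{ki}=\begin{cases}0 & k<i,\\ 1 & k=i,\\ 2^{k-i-1} & k>i.\end{cases}
\]
A column sum then yields $(\bm{1}^T\hat L^{-1})_i=2^{n-1-i}$, hence $(\bm{\ell}^T\hat L^{-1})_i=-2^{n-1-i}$. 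Expanding $\hat U^{-1}\bm{u}$ componentwise and re-indexing with $\ell=n-k$ gives $(\hat U^{-1}\bm{u})_j=2^{n-1}\sum_{\ell=1}^{n-j}2^{-\ell}\hat U^{-1}_{j,n-\ell}$. Substituting these into the boundary formulas of Lemma \ref{lemma} immediately produces the three special cases $p^{(i,n)}_\epsilon$, $p^{(n,j)}_\epsilon$, and $p^{(n,n)}_\epsilon$.

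For the generic case $i,j<n$ I would use $(\hat L\hat U)^{-1}_{ji}=(\hat U^{-1}\hat L^{-1})_{ji}=\hat U^{-1}_{ji}+\sum_{k=i+1}^{n-1}2^{k-i-1}\hat U^{-1}_{jk}$ and split on whether $i<j$ or $j\le i$. When $i<j$, the $\hat U^{-1}_{ji}$ term vanishes and the substitution $\ell=n-k$ gives $(\hat L\hat U)^{-1}_{ji}=2^{n-1-i}\sum_{\ell=1}^{n-j}2^{-\ell}\hat U^{-1}_{j,n-\ell}$. Rewriting Lemma \ref{lemma} as $p^{(i,j)}_\epsilon=[p+\epsilon\,(p(\hat L\hat U)^{-1}_{ji}+(\hat U^{-1}\bm{u})_j(\bm{\ell}^T\hat L^{-1})_i)]/[1+\epsilon(\hat L\hat U)^{-1}_{ji}]$, the two bracketed $\epsilon$ terms cancel exactly (both equal $\pm 2^{2n-2-i}\sum_\ell 2^{-\ell}\hat U^{-1}_{j,n-\ell}$), leaving $p^{(i,j)}_\epsilon=2^{n-1}/(1+\epsilon\,2^{n-1-i}\sum_\ell 2^{-\ell}\hat U^{-1}_{j,n-\ell})$, which matches the first case of the theorem after pulling $2^{n-1}$ into the denominator.

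The case $j\le i$ is the main obstacle, because $\hat U^{-1}_{ji}$ no longer vanishes and the sums in $(\hat U^{-1}\bm{u})_j$ and $(\hat L\hat U)^{-1}_{ji}$ run over overlapping but different ranges of $n-\ell$. My plan is to form $A^{-1}_{ji}=(\hat L\hat U)^{-1}_{ji}+p^{-1}(\hat U^{-1}\bm{u})_j(\bm{\ell}^T\hat L^{-1})_i$ and write the difference of the sums over $\{1,\ldots,n-i-1\}$ and $\{1,\ldots,n-j\}$ as the leftover $-\sum_{\ell=n-i}^{n-j}2^{-\ell}\hat U^{-1}_{j,n-\ell}$. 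Reparametrizing with $\ell'=\ell-(n-i)$ converts this into $-2^{i-n}[\hat U^{-1}_{ji}+\sum_{\ell'=1}^{i-j}2^{-\ell'}\hat U^{-1}_{j,i-\ell'}]$; combined with the prefactor $2^{n-1-i}$ this contributes exactly $-\tfrac12[\hat U^{-1}_{ji}+\sum_{\ell=1}^{i-j}2^{-\ell}\hat U^{-1}_{j,i-\ell}]$, and the leftover $\hat U^{-1}_{ji}$ from $(\hat L\hat U)^{-1}_{ji}$ then gives $A^{-1}_{ji}=\tfrac12[\hat U^{-1}_{ji}-\sum_{\ell=1}^{i-j}2^{-\ell}\hat U^{-1}_{j,i-\ell}]$. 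Plugging this back into $p^{(i,j)}_\epsilon=p(1+\epsilon A^{-1}_{ji})/(1+\epsilon(\hat L\hat U)^{-1}_{ji})$ and dividing numerator and denominator by $2^{n-1}$ yields the second case of the theorem. The only real difficulty is the bookkeeping of the three overlapping index ranges $\{1,\ldots,n-j\}$, $\{1,\ldots,n-i-1\}$, and $\{1,\ldots,i-j\}$; everything else is direct substitution.
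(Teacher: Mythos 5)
Your proposal is correct and follows essentially the same route as the paper: compute $\hat L^{-1}$ from its bidiagonal-with-fill structure, extract $(\bm{\ell}^T\hat L^{-1})_i$, $(\hat U^{-1}\bm{u})_j$, and $(\hat L\hat U)^{-1}_{ji}$, substitute into Lemma \ref{lemma}, and simplify by splitting on $i<j$ versus $j\le i$. The only cosmetic difference is that you package the substitution through $A^{-1}_{ji}=(\hat L\hat U)^{-1}_{ji}+p^{-1}(\hat U^{-1}\bm{u})_j(\bm{\ell}^T\hat L^{-1})_i$ and the form $p(1+\epsilon A^{-1}_{ji})/(1+\epsilon(\hat L\hat U)^{-1}_{ji})$ from the lemma's proof, whereas the paper plugs directly into the displayed formula of the lemma and simplifies; the index bookkeeping and the exact cancellation for $i<j$ are the same in both.
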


\begin{proof}
By Proposition \ref{prop}, $A = \begin{pmatrix} \hat L & 0 \\  - \bm{1}^T & 1 \end{pmatrix} \begin{pmatrix} \hat U & \bm{u} \\ 0 & 2^{n-1} \end{pmatrix}$, where $\hat L \in \mathrm{SL}_{n-1}(\mathbb{R})$ is lower unitriangular with $\hat L_{ij} = -1$ for all $i>j$, $\bm{u} = (1,2,...,2^{n-2})^T$, and $\hat U \in \mathrm{GL}_{n-1}(\mathbb{R})$ is upper triangular. The matrix $\hat L$ has a simple structure, and its inverse has entries given by $\hat L^{-1}_{ij} = \phi(i-j)$, where $\phi(k)$ equals zero for $k<0$, one for $k = 0$, and $2^{k-1}$ for $k>0$. Therefore,
\[(\bm{1}^T \hat L^{-1})_i = \sum_{k = i}^{n-1} \phi(k-i) = 2^{n-i-1} \qquad \text{and} \qquad  (\hat L\hat U)_{ji}^{-1} = \sum_{k = i}^{n-1} \phi(k-i) \hat U_{jk}^{-1}=
\hat U_{ji}^{-1} + \sum_{k = i+1}^{n-1} 2^{k-i-1} \hat U_{jk}^{-1} . \]
Applying Lemma \ref{lemma} to $A$ and noting that
$(\hat U^{-1} \bm{u})_j = \sum_{k = j}^{n-1} 2^{k-1} \hat U^{-1}_{jk}$, we have $p_{\epsilon}^{(n,n)} = 2^{n-1} + \epsilon$, $p_{\epsilon}^{(i,n)} = 2^{n-1} (1+ \epsilon \, 2^{-i})$ for $i<n$, $p_{\epsilon}^{(n,j)} = 2^{n-1}(1 - \epsilon \sum_{\ell = 1}^{n-j} 2^{-\ell} \hat U_{j,n-\ell}^{-1})$ for $j<n$, and 
\[p_\epsilon^{(i,j)} = 2^{n-1} - \frac{ \epsilon 2^{n-i-1} \sum_{k = j}^{n-1} 2^{k-1} \hat U^{-1}_{jk}}{1+\epsilon(\hat U_{ji}^{-1} + \sum_{k = i+1}^{n-1} 2^{k-i-1} \hat U_{jk}^{-1} )}\qquad \text{for} \quad i,j < n. \]
When $i<j<n$, $\hat U_{ji}^{-1} = 0$ and $\sum_{k = i+1}^{n-1} 2^{k-i-1} \hat U_{jk}^{-1} = \sum_{k = j}^{n-1} 2^{k-i-1} \hat U^{-1}_{jk}$, and so $p_{\epsilon}^{(i,j)}$ equals $(2^{1-n} +\epsilon \, 2^{-i} \sum_{\ell=1}^{n-j} 2^{-\ell} \hat U_{j,n-\ell}^{-1})^{-1}$. Finally, in the case $j \le i<n$, we have
\[p_\epsilon^{(i,j)} = \frac{1 + \tfrac{1}{2} \epsilon \, ( \hat U_{ji}^{-1} - \sum_{\ell = 1}^{i-j} 2^{-\ell} \hat U_{j,i-\ell}^{-1} )}{2^{1-n} + \epsilon (2^{1-n} \hat U_{ji}^{-1} +2^{-i} \sum_{\ell = 1}^{n-i-1} 2^{-\ell} \hat U_{j,n-\ell}^{-1} )}.\]
\end{proof}

Equation \ref{eq:higham} of Theorem \ref{theorem} deserves a number of observations. First, we note a connection between the rate at which $p_\epsilon^{(i,j)}$ decreases and the condition number of the matrix: if the last pivot under $\epsilon$ perturbation is much smaller than $1/|\epsilon|$, then $A$ is ill-conditioned.

\begin{corollary}\label{cor:cond}
Let $A \in \mathrm{GL}_n(\mathbb{R})$, $\|A\|_{\max} = 1$, be a Higham$^2$ matrix of the form in Equation \ref{eq:higham_desc} with $P = D = I$. If $|\epsilon|<1$ and $\sqrt{n}<|p_{\epsilon}^{(i,j)}|<2^{n-5}$, then $\kappa_2(A)= \|A\|_2 \|A^{-1}\|_2 \ge \sqrt{n} \, \big| 3 \epsilon p_\epsilon^{(i,j)} \big|^{-1}$.
\end{corollary}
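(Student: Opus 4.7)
The plan is to produce the two bounds $\|A\|_2 \ge \sqrt n$ (easy) and $\|A^{-1}\|_2 \ge 1/(3|\epsilon p_\epsilon^{(i,j)}|)$ (the real content), then multiply. The first is immediate: the last column of a Higham$^2$ matrix with $P=D=I$ is the all-ones vector $\bm 1$, so $\|Ae_n\|_2 = \sqrt n$. For the second I would exploit $\|A^{-1}\|_2 \ge |(A^{-1})_{k\ell}|$ for every entry and locate one that is large, making repeated use of the block form of $A^{-1}$ from the proof of Lemma \ref{lemma}; in particular $(A^{-1})_{jn} = -p^{-1} q_j$, where $q := \hat U^{-1} \bm u$ and $p = 2^{n-1}$.

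First I rule out the indices with $j = n$: by Theorem \ref{theorem}, $p_\epsilon^{(i,n)} = 2^{n-1}(1 + \epsilon 2^{-i})$ for $i < n$ and $p_\epsilon^{(n,n)} = 2^{n-1}+\epsilon$, both of which are at least $2^{n-2}$ in absolute value once $|\epsilon|<1$, contradicting $|p_\epsilon^{(i,j)}|<2^{n-5}$. For $i = n$, $j < n$, Theorem \ref{theorem} already gives $p_\epsilon^{(n,j)} = p - \epsilon q_j$, hence $(A^{-1})_{jn} = (p_\epsilon^{(n,j)}-p)/(\epsilon p)$; the assumption $|p_\epsilon^{(n,j)}| < p/16$ forces $|p-p_\epsilon^{(n,j)}| \ge 15p/16$, so $|(A^{-1})_{jn}| \ge 15/(16|\epsilon|)$, and this exceeds $1/(3|\epsilon p_\epsilon^{(i,j)}|)$ since $|p_\epsilon^{(i,j)}| > \sqrt n \ge 1$.

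The main case is $i, j < n$. Here I would start from Lemma \ref{lemma} rewritten as $p_\epsilon^{(i,j)}(1 + \epsilon (\hat L\hat U)^{-1}_{ji}) = p(1 + \epsilon (A^{-1})_{ji})$ and substitute the Higham$^2$ block-form identity $(\hat L\hat U)^{-1}_{ji} = (A^{-1})_{ji} + 2^{-i} q_j$, which comes from $(\bm\ell^T \hat L^{-1})_i = -2^{n-i-1}$ as computed in the proof of Theorem \ref{theorem}. Clearing denominators and using $2^{n-1}=p$ at a single critical cancellation yields $q_j = 2^i(p - p_\epsilon^{(i,j)})(1 + \epsilon (A^{-1})_{ji})/(\epsilon\, p_\epsilon^{(i,j)})$, so $|(A^{-1})_{jn}| = |q_j|/p = 2^{i-n+1}|p - p_\epsilon^{(i,j)}|\,|1+\epsilon (A^{-1})_{ji}|/|\epsilon p_\epsilon^{(i,j)}|$. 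I then split on $|1+\epsilon (A^{-1})_{ji}|$: if it is $\ge 1/2$, combining with $|p - p_\epsilon^{(i,j)}| \ge 15p/16$ and $i \ge 1$ gives $|(A^{-1})_{jn}| \ge 15/(16|\epsilon p_\epsilon^{(i,j)}|) > 1/(3|\epsilon p_\epsilon^{(i,j)}|)$; otherwise $|\epsilon (A^{-1})_{ji}| > 1/2$, so $|(A^{-1})_{ji}| > 1/(2|\epsilon|) > 1/(3|\epsilon p_\epsilon^{(i,j)}|)$ using $|p_\epsilon^{(i,j)}| > 2/3$. Either way $\|A^{-1}\|_2 > 1/(3|\epsilon p_\epsilon^{(i,j)}|)$, and combining with $\|A\|_2 \ge \sqrt n$ finishes the proof.

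The main obstacle is the algebraic bookkeeping leading to the identity for $q_j$: one has to notice that the $\epsilon\, 2^{-i} q_j$ contribution from $\epsilon (\hat L\hat U)^{-1}_{ji}$ recombines with $p$ on the other side so that its coefficient becomes exactly $p_\epsilon^{(i,j)}$, cleanly isolating $q_j$. Once this identity is in hand, the subsequent case split and the ensuing inequalities are routine arithmetic.
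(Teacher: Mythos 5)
Your proof is correct, but it takes a genuinely different route from the paper's. The paper first recasts $\|A^{-1}\|_2$ as $\max_{y\ne 0}\|U^{-1}y\|_2/\|Ly\|_2$ and, using the explicit $\pm 1$ column structure of $L$, lower-bounds $\kappa_2(A)$ by $\max_{k,\ell}\sqrt{n}\,|U^{-1}_{k,n-\ell}|/\sqrt{\ell+1}$; it then argues by contradiction, assuming all relevant entries of $U^{-1}$ are small and showing that feeding those bounds into the formulae of Theorem~\ref{theorem} forces $|p_\epsilon^{(i,j)}|$ to be larger than itself. You instead bound $\|A\|_2\ge\sqrt{n}$ and $\|A^{-1}\|_2$ separately, the latter directly by exhibiting a single large entry of $A^{-1}$: you solve the identity $p_\epsilon^{(i,j)}(1+\epsilon(\hat L\hat U)^{-1}_{ji})=p(1+\epsilon A^{-1}_{ji})$ for $q_j=(\hat U^{-1}\bm u)_j$ after substituting $(\hat L\hat U)^{-1}_{ji}=A^{-1}_{ji}+2^{-i}q_j$, then split on whether $|1+\epsilon A^{-1}_{ji}|\ge 1/2$, which makes either $A^{-1}_{jn}=-p^{-1}q_j$ or $A^{-1}_{ji}$ itself large. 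Both approaches hinge on the same formulae from Lemma~\ref{lemma} and Theorem~\ref{theorem}, but yours is constructive (it names the offending entry of $A^{-1}$) and avoids both the norm-ratio reformulation and the contradiction; the price is having to handle the boundary cases $i=n$ or $j=n$ and the block-inverse bookkeeping explicitly, whereas the paper's contradiction argument folds all $(i,j)$ into the same template (though it sketches only one representative case and leaves the rest to the reader). Your arithmetic checks out: $2^{i-n+1}p=2^i$ turns $|p-p_\epsilon^{(i,j)}|\ge 15p/16$ into $|A^{-1}_{jn}|\ge 15/(16|\epsilon p_\epsilon^{(i,j)}|)$ when $i\ge 1$, the complementary subcase gives $|A^{-1}_{ji}|>1/(2|\epsilon|)\ge 1/(3|\epsilon p_\epsilon^{(i,j)}|)$ since $|p_\epsilon^{(i,j)}|>\sqrt{n}\ge 1$, and the $j=n$ cases are indeed incompatible with $|p_\epsilon^{(i,j)}|<2^{n-5}$ once $|\epsilon|<1$.
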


\begin{proof}
Let $|\epsilon|<1$ and $\sqrt{n}<|p_{\epsilon}^{(i,j)}|<2^{n-5}$ for some $i,j$. We first relate $\kappa_2(A)$ to the entries of $U^{-1}$:
\[ \|A\|_2 \|A^{-1}\|_2 \ge \|A \mathbf{e}_n\|_2 \|A^{-1}\|_2 = \sqrt{n}\max_{ y \ne 0} \frac{\|U^{-1} y\|_2}{\|L y\|_2} \ge \sqrt{n}\max_{\ell} \frac{\|U^{-1} e_{n-\ell}\|_2}{\|L e_{n-\ell}\|_2} \ge \max_{k,\ell} \frac{\sqrt{n} \, |U^{-1}_{k,n-\ell}|}{\sqrt{\ell+1}} .\]
What remains is to show that $|U^{-1}_{k,n-\ell}| \ge \sqrt{\ell+1} \,  \big|3 \epsilon p_{\epsilon}^{(i,j)} \big|^{-1}$ for some $k,\ell$. We proceed by contradiction. If this is not the case, then
\[\left| \epsilon \sum_{\ell = 1}^{k} \frac{\hat U_{j,n-\ell}^{-1}}{2^{\ell} } \right| < \frac{1}{\big|p_{\epsilon}^{(i,j)} \big|} \sum_{\ell = 1}^{\infty}  \frac{\sqrt{\ell+1}}{3 \times 2^{\ell}} < \frac{0.57}{\big|p_{\epsilon}^{(i,j)} \big|} \quad \text{and} \quad \left| \epsilon \, ( \hat U_{ji}^{-1} - \sum_{\ell = 1}^{i-j} 2^{-\ell} \hat U_{j,i-\ell}^{-1} ) \right| < \sum_{\ell = 0}^\infty \frac{1}{3 \times 2^{\ell}} = \frac{2}{3}. \]
These two bounds, combined with the formulae of Theorem \ref{theorem}, lead to a contradiction for all choices of $(i,j)$. For example, if $j \le i<n$, then 
\[\big|p_\epsilon^{(i,j)}\big| \ge \frac{1 - \tfrac{1}{2} \left|\epsilon \, ( \hat U_{ji}^{-1} - \sum_{\ell = 1}^{i-j} 2^{-\ell} \hat U_{j,i-\ell}^{-1} )\right|}{2^{1-n} + \left|\epsilon (2^{1-n} \hat U_{ji}^{-1} +2^{-i} \sum_{\ell = 1}^{n-i-1} 2^{-\ell} \hat U_{j,n-\ell}^{-1} )\right|} > \frac{2/3}{2^{1-n} + 0.57/\big|p_{\epsilon}^{(i,j)}\big|}> \frac{2/3}{2^{-4}+0.57}  \, \big|p_{\epsilon}^{(i,j)}\big|,\]
a contradiction. The remaining cases are similar, and are left to the reader.
\end{proof}

Now, let us restrict our attention to the case $i<j<n$, with $i$ relatively small. As long as $\epsilon \, 2^{-i} \sum_{\ell=1}^{n-j} 2^{-\ell} \hat U_{j,n-\ell}^{-1}$ is not exponentially small in $n$, $p_\epsilon^{(i,j)}$ is approximately $2^{i} \, (\epsilon \, \sum_{\ell=1}^{n-j} 2^{-\ell} \hat U_{j,n-\ell}^{-1})^{-1}$. One would expect this to  be the case for ``most" Higham$^2$ matrices when $\epsilon$ is only polynomially small, though exceptions certainly exist (e.g., the Wilkinson matrix $\hat U = I$). This intuition is supported by experimental results in Section \ref{sec:experiments}. The case $i = 1$ and $j = n-1$ is particularly striking, and gives guaranteed improvement for all Higham$^2$ matrices.
\begin{corollary}\label{cor:topright}
Let $A \in \mathrm{GL}_n(\mathbb{R})$, $\|A\|_{\max} = 1$, be a Higham$^2$ matrix of the form in Equation \ref{eq:higham_desc} with $P = D = I$. Then
\begin{equation}\label{ineq:top_right} | p^{(1,n-1)}_\epsilon| \le \frac{4 +o_n(1)}{|\epsilon|}, \qquad \text{where} \qquad |o_n(1)| < 2^{-(n-6)} |\epsilon|^{-1} \; \text{for} \; |\epsilon| > 2^{-(n-4)} .
\end{equation}
\end{corollary}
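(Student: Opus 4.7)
The plan is to specialize the formula of Theorem \ref{theorem} to $i = 1$ and $j = n-1$. In this case the inner sum collapses to the single term $\ell = 1$, $n-\ell = n-1$, giving
\[ p^{(1,n-1)}_\epsilon \;=\; \frac{1}{2^{1-n} + \tfrac{\epsilon}{4}\,\hat U_{n-1,n-1}^{-1}} \;=\; \frac{4\,\hat U_{n-1,n-1}}{2^{3-n}\,\hat U_{n-1,n-1} + \epsilon}, \]
since $\hat U$ is upper triangular and so $\hat U^{-1}_{n-1,n-1} = 1/\hat U_{n-1,n-1}$. The whole corollary therefore reduces to controlling $|\hat U_{n-1,n-1}|$.

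The key step, and the only one with genuine content, is to show $|\hat U_{n-1,n-1}|\le 1$ for every Higham$^2$ matrix. My plan is to extract this from the two max-norm constraints in Proposition \ref{prop} by reading off the bottom entry of column $n-1$ from two different products. Because $\hat L$ is unit lower triangular with $-1$ strictly below the diagonal,
\[ (\hat L\hat U)_{n-1,n-1} = \hat U_{n-1,n-1} - \sum_{k=1}^{n-2}\hat U_{k,n-1}, \qquad (\bm{1}^T\hat U)_{n-1} = \hat U_{n-1,n-1} + \sum_{k=1}^{n-2}\hat U_{k,n-1}, \]
whose sum is exactly $2\hat U_{n-1,n-1}$. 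The constraints $\|\hat L\hat U\|_{\max}\le 1$ and $\|\bm{1}^T\hat U\|_\infty\le 1$ then yield $|\hat U_{n-1,n-1}|\le 1$ by the triangle inequality.

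Once this bound is in hand, the rest is routine. Applying the reverse triangle inequality to the denominator of the closed form above gives $|2^{3-n}\hat U_{n-1,n-1} + \epsilon| \ge |\epsilon| - 2^{3-n}$, so
\[ |p^{(1,n-1)}_\epsilon| \le \frac{4}{|\epsilon| - 2^{3-n}} \;=\; \frac{4}{|\epsilon|}\cdot \frac{1}{1 - 2^{3-n}/|\epsilon|}.\]
Under the hypothesis $|\epsilon| > 2^{-(n-4)}$, the ratio $2^{3-n}/|\epsilon|$ is at most $1/2$, so the elementary inequality $(1-x)^{-1} \le 1 + 2x$ valid on $[0,1/2]$ yields
\[ |p^{(1,n-1)}_\epsilon| \;\le\; \frac{4}{|\epsilon|} + \frac{2^{6-n}}{|\epsilon|^{2}} \;=\; \frac{4 + 2^{-(n-6)}/|\epsilon|}{|\epsilon|}, \]
which is exactly the claimed form with $|o_n(1)| \le 2^{-(n-6)}|\epsilon|^{-1}$. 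The main obstacle, as indicated, is purely the structural bound $|\hat U_{n-1,n-1}|\le 1$; once one sees the trick of adding the two expressions for the $(n-1,n-1)$ entry, everything else reduces to bookkeeping.
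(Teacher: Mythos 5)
Your proof is correct and follows essentially the same route as the paper. You specialize the $i<j<n$ branch of Theorem~\ref{theorem} to $(1,n-1)$, note that the inner sum collapses to the single term $\ell=1$, use $\hat U^{-1}_{n-1,n-1}=1/\hat U_{n-1,n-1}$ to obtain the same closed form $p^{(1,n-1)}_\epsilon = 4\hat U_{n-1,n-1}/\big(\epsilon + 2^{-(n-3)}\hat U_{n-1,n-1}\big)$, and establish $|\hat U_{n-1,n-1}|\le 1$ from precisely the averaging identity $\hat U_{n-1,n-1}=\tfrac12\big[(\hat L\hat U)_{n-1,n-1}+(\bm 1^T\hat U)_{n-1}\big]$ that the paper invokes tersely; you simply spell out the two underlying entry computations. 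The only cosmetic difference is in the final bookkeeping: the paper rewrites the fraction as $\tfrac{\hat U_{n-1,n-1}}{\epsilon}\big(4 - \text{correction}\big)$ and bounds the correction, while you apply the reverse triangle inequality to the denominator and then $(1-x)^{-1}\le 1+2x$ on $[0,1/2]$; the two computations yield the identical error bound $2^{-(n-6)}|\epsilon|^{-1}$, so this is the same proof in a slightly different costume.
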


\begin{proof}
By Equation \ref{eq:higham},
$$ p^{(1,n-1)}_\epsilon = \frac{4 \hat U_{n-1,n-1}}{\epsilon + 2^{-(n-3)} \hat U_{n-1,n-1} } = \frac{\hat U_{n-1,n-1}}{\epsilon}\bigg( 4 - \frac{2^{-(n-5)} \hat U_{n-1,n-1}}{\epsilon + 2^{-(n-3)} \hat U_{n-1,n-1}} \bigg).$$
Noting that $|\hat U_{n-1,n-1}| = \big| \big[ (\hat L\hat U)_{n-1,n-1} +(\bm{1}^T \hat U)_{n-1} \big] / 2 \big| \le 1$ gives the desired result.
\end{proof}

The entry $(1,n-1)$ is an example of a perturbation direction that always produces a small last pivot when $\epsilon$ is only polynomially small. Finally, we note that, while Lemma \ref{lemma} and Theorem \ref{theorem} apply only to the last pivot, this general framework holds for arbitrary entries of $U$, as every entry of $U$ is the last pivot of the LU factorization of some submatrix of $A$. For instance, Corollary \ref{cor:topright} implies that a only polynomially small $\epsilon$ perturbation to the $(1,n-k)^{th}$ entry of a Higham$^2$ matrix leads to $|U_{n-k+1,n}| \le (4 + o_{n}(1))/|\epsilon|$ for any fixed $k$ (with $n$ growing). In Section \ref{sec:experiments}, we make use of the insights gained from Theorem \ref{theorem} to suggest perturbations tailored to the most influential components of $A$, and compare their effect to perturbations applied uniformly to $A$.

\section{Experimental Results}\label{sec:experiments}

Here we perform two experiments, illustrated in Figures \ref{fig:heatmap} and \ref{fig:data}. First, in Figure \ref{fig:heatmap}, we consider the effects of an $\epsilon = 10^{-8}$ perturbation to the last pivot of Higham$^2$ matrices of dimension $n = 100$. Though $n = 100$ is quite a small test-case, it is already more than sufficient for our purposes, as, in double precision, the last pivot is nearly equal to the inverse of machine epsilon squared. The heatmap on the left is of a Higham$^2$ matrix generated by taking the $\hat U \in \mathrm{GL}_{n-1}(\mathbb{R})$ of Proposition \ref{prop} to have independent standard normal entries, scaled so that $\|A\|_{\max} = 1$, the map on the right is of the Wilkinson matrix (see Equation \ref{eqn:wilk}), and the map in the middle is of a matrix in between the two (w.r.t. choice of $\hat U$). Perturbations in the top left portion of a random Higham$^2$ matrix appear to be most impactful. This is consistent with Theorem \ref{theorem} and the fact that the inverse of an upper triangular matrix with normal entries tends to have exponentially large entries (in $n$) near the upper-right corner. However, the Wilkinson matrix provides a clear reminder that this is not always the case. The inverse of $\hat U = I$ has zero entries above the diagonal, rendering perturbations to the top-left entries of $A$ relatively useless. Our results are also consistent with Inequality \ref{ineq:top_right}: perturbing the top-right $(1,n-1)$ entry with a sufficiently large $\epsilon$ is a reliable way to always decrease the last pivot size.

\begin{figure}[t!]
    \centering
    \includegraphics[width=\textwidth]{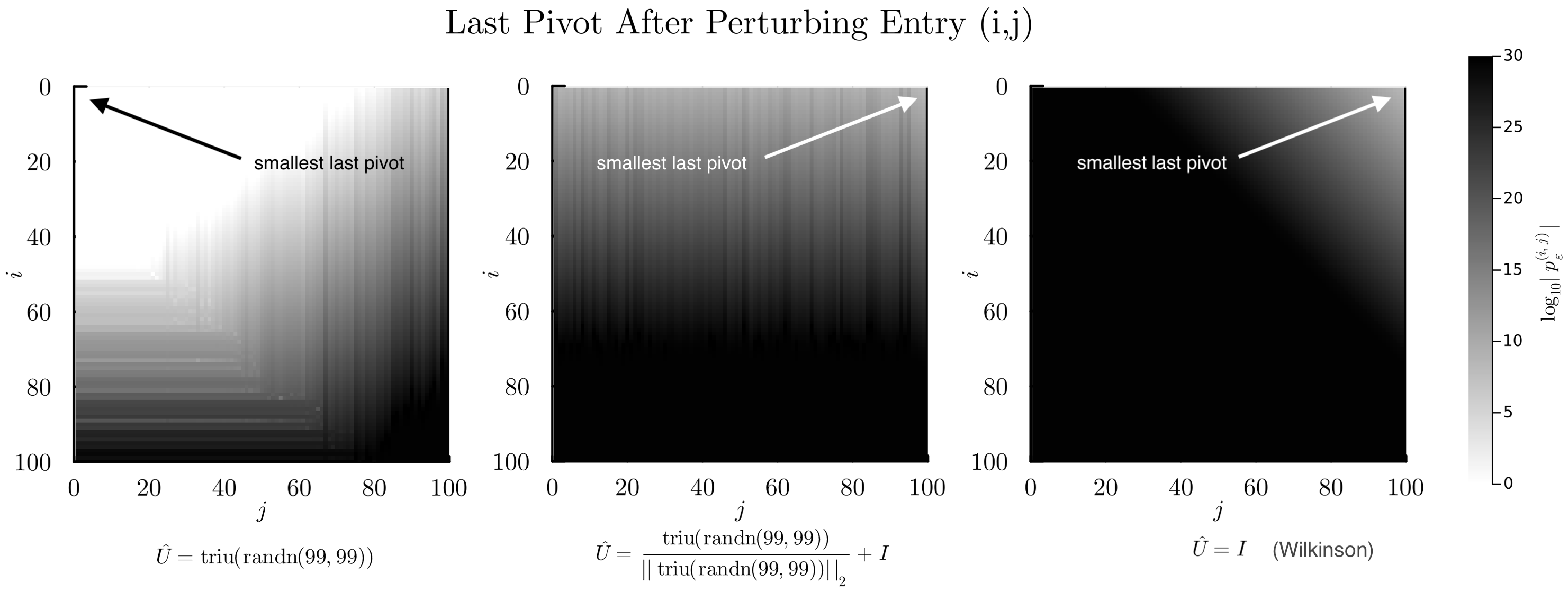}
    \caption{\small The effects of an $\epsilon = 10^{-8}$ perturbation at different entries of three $100 \times 100$ Higham$^2$ matrices for different choices of $\hat U$ ($\hat U = I$ is the Wilkinson matrix). We plot a heatmap for each, where the $(i, j)^{th}$ grid point is the value of $\log_{10}|p^{(i,j)}_\epsilon|$ (in exact arithmetic).}
       \label{fig:heatmap}
\end{figure}

\begin{figure}[t!]
    \centering
    \subfigure[$\hat U = \mathrm{triu}(\mathrm{randn}(n,n))$]{
        \includegraphics[width=0.315\textwidth]{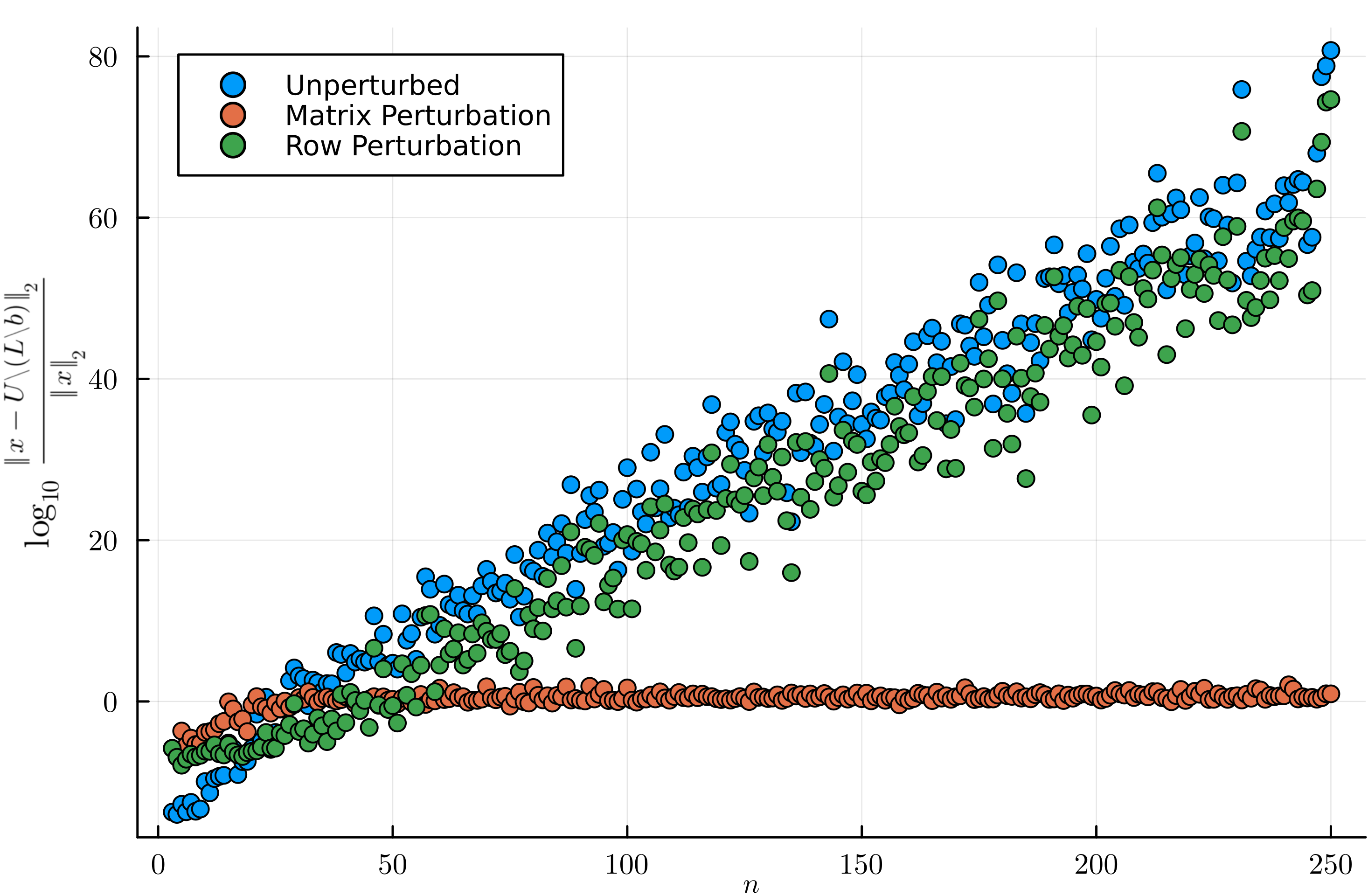}
    } \subfigure[$\hat U = \frac{\mathrm{triu}(\mathrm{randn}(n,n))}{\|\mathrm{triu}(\mathrm{randn}(n,n))\|_2}+I$]{\includegraphics[width=0.315\textwidth]{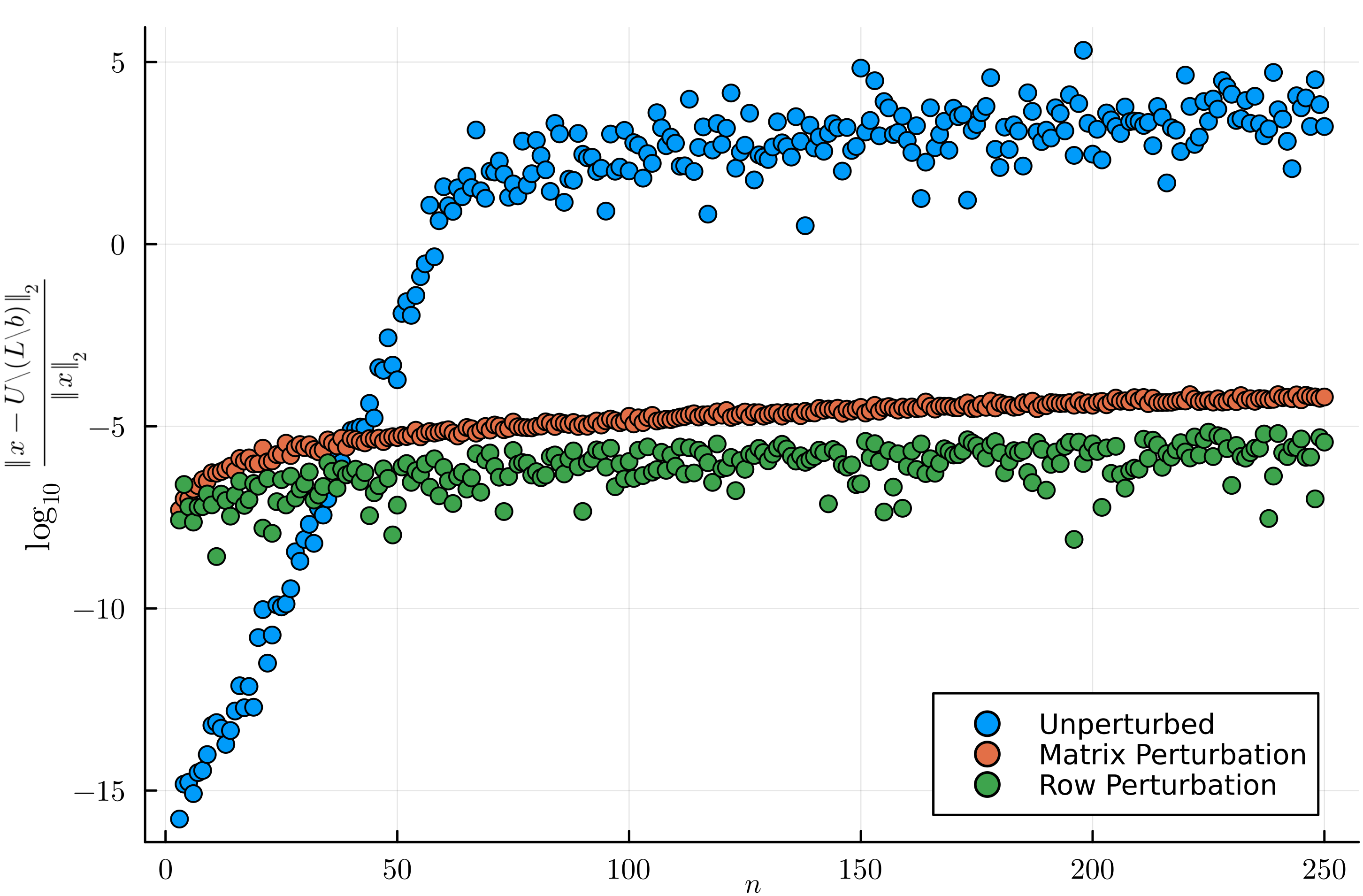}
    } \subfigure[$\hat U = I$ (Wilkinson)]{\includegraphics[width=0.315\textwidth]{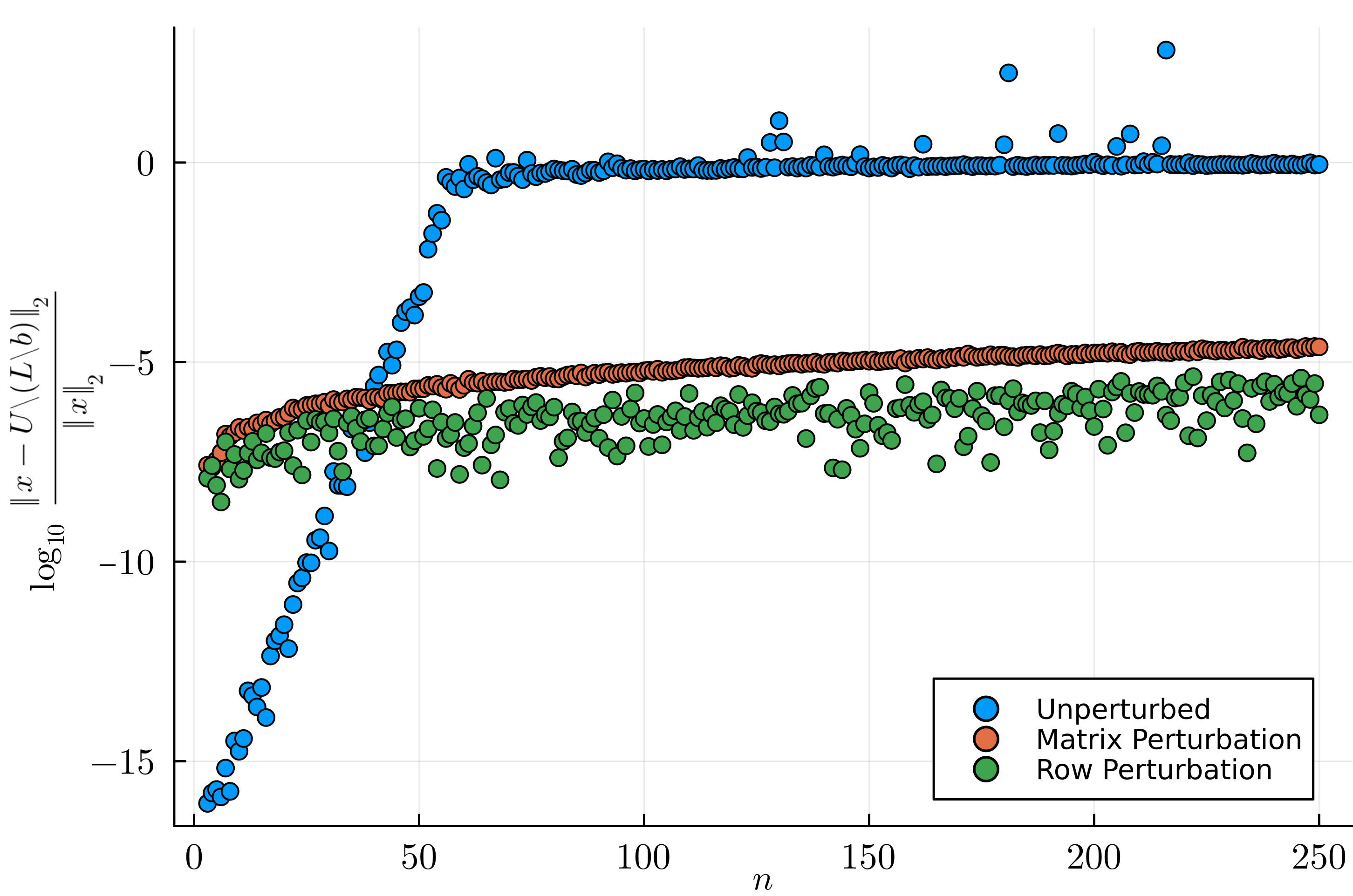}}
    \caption{\small Solving $Ax =b$ for Higham$^2$ matrices using Gaussian elimination with no pivoting (GENP) in double precision. For each value of $n$, we generate a Higham$^2$ matrix $A$ (in exact precision), a Gaussian vector $x$, compute $b = Ax$, and solve $A x =b$ for $x$ using GENP applied either to $A$ or $A+ 10^{-8} n B$, where $B$ has independent standard normal entries in either the first row only or the entire matrix. The three scatter plots report the log relative error for each choice of $\hat U$ and perturbation method.}
    \label{fig:data}
\end{figure}

    Of course, we are not merely interested in the last pivot, but in the quality of solution to $Ax = b$ we obtain using Gaussian elimination. Our theoretical and experimental results in Theorem \ref{theorem}, Corollary \ref{cor:topright}, and Figure \ref{fig:heatmap} give key insights into the stability of the last pivot, with implications for the growth factor itself, as every entry of $U$ is the last pivot of a sub-matrix of $A$. In Figure \ref{fig:data}, we examine the effects of matrix perturbation on the numerical solution to $Ax =b$ for Higham$^2$ matrices using Gaussian elimination with no pivoting. In the left plot, we observe that the ill-conditioning of a random Higham$^2$ matrix is a major barrier to a reasonable solution. This is consistent with the theoretical observation that extremely fast decay in the last pivot implies ill-conditioning (Corollary \ref{cor:cond}). In both the middle and right plots, we observe that the perturbation to the first row is the superior strategy. It is quite possible that a similar, albeit more complicated phenomenon holds more broadly than the class of maximum growth factor Higham$^2$ matrices considered here, and that there may be benefits to considering perturbations tailored specifically to Gaussian elimination with partial pivoting.

\section*{Acknowledgements}

\begin{figure}[b!]
    \centering
    \subfigure[A depiction of high dimensional $n\times n$ matrix space
portraying the Higham${}^2$ matrices as a blue curve and the
matrices with infinite growth as a red curve. The Wilkinson matrix (W) is shown as being far from the singular set. On the singular side of the blue curve, GE can be unstable.
]{ \includegraphics[width=0.55\textwidth]{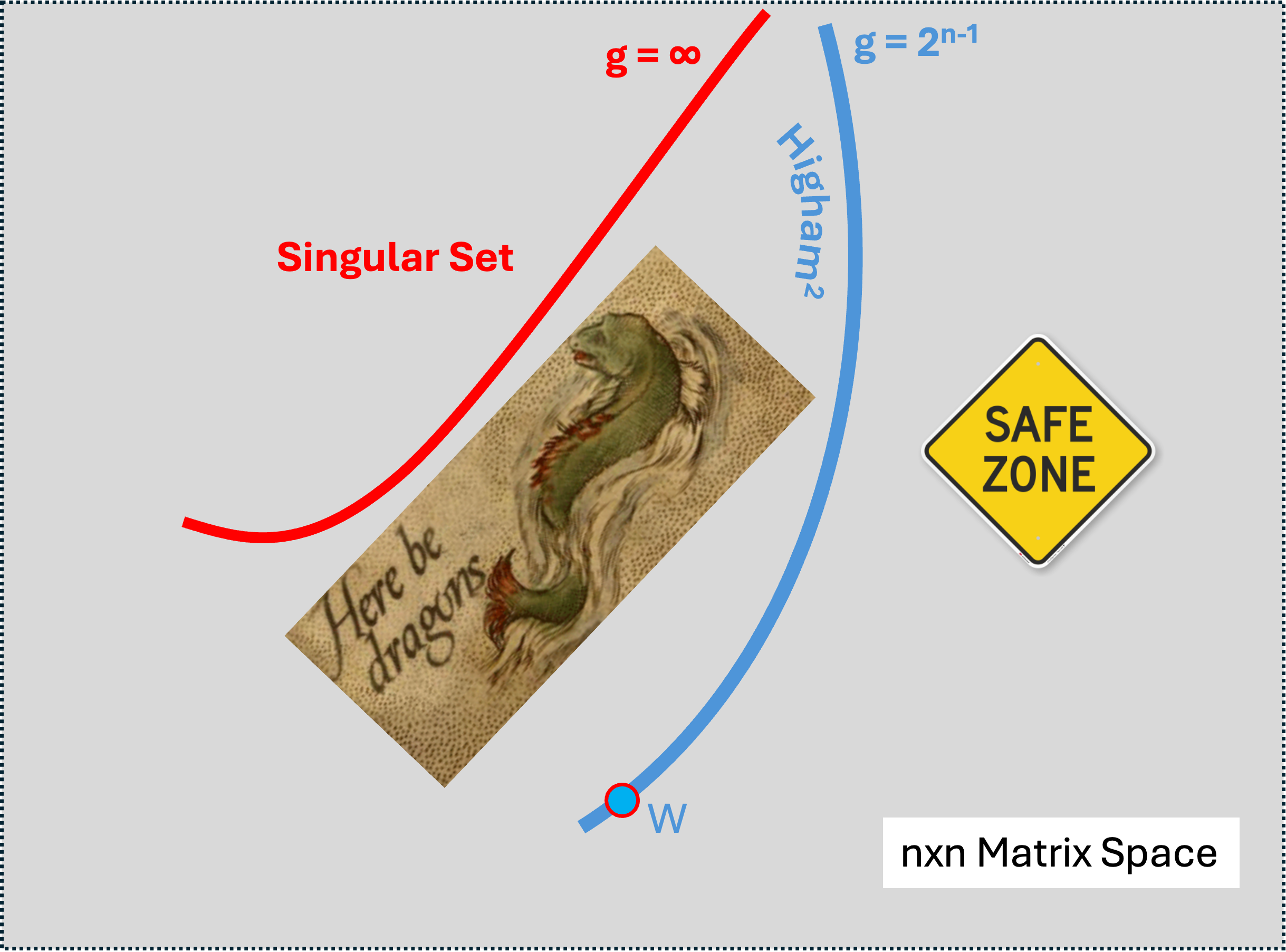}
    } \\ \subfigure[Matrix Space is in the plane of the floor with the growth factor g plotted in the z coordinate,
The Higham$^2$ matrices are portrayed as a straight line and the distance to singularity is true. Above the
Higham$^2$ matrices, a ``fence”  seems to appear as all have growth $2^{n-1}$.
]{\includegraphics[width=0.48\textwidth]{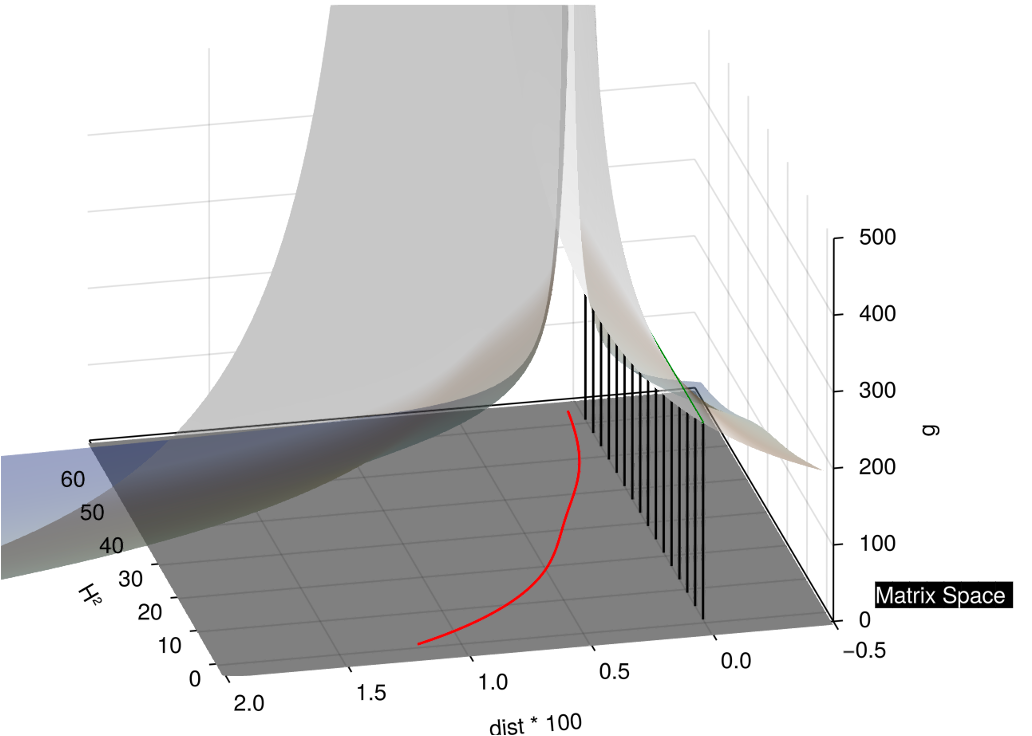}
    } \hspace{3 mm} \subfigure[In this figure we depict the
singular set as a straight line, so the
distance from Higham$^2$ matrices 
are distorted to be constant. Doing
so serves the interesting purpose of 
showing that g appears virtually
independent of the matrix in this 
distorted view.
]{\includegraphics[width=0.38\textwidth]{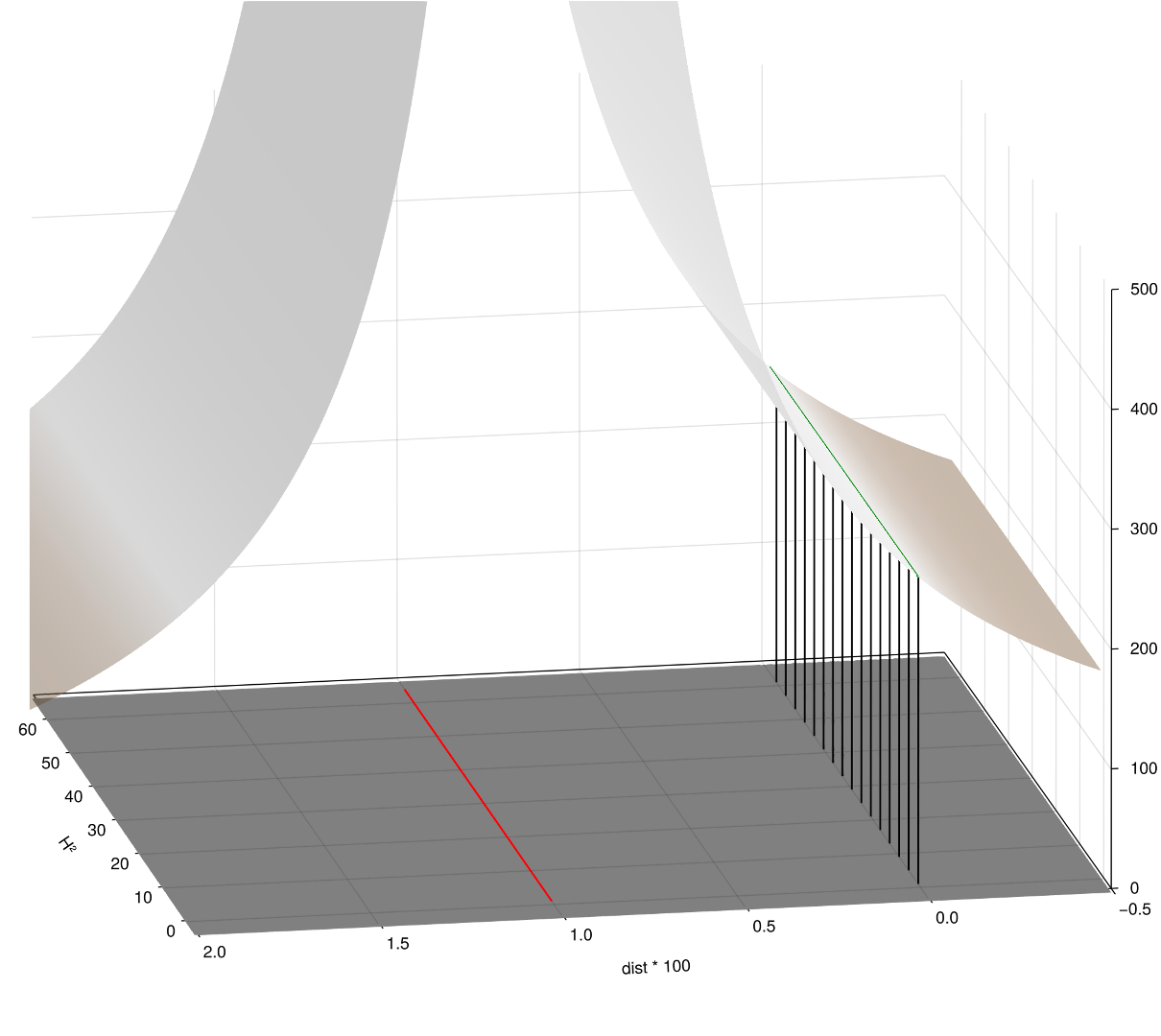}}
   \caption{}
\end{figure}

\setstretch{.4}
\begin{onehalfspacing}

{\tiny This material is based upon work supported by the National Science Foundation under grant no. OAC-1835443, grant no. SII-2029670, grant no. ECCS-2029670, grant no. OAC-2103804, and grant no. PHY-2021825. We also gratefully acknowledge the U.S. Agency for International Development through Penn State for grant no. S002283-USAID. The information, data, or work presented herein was funded in part by the Advanced Research Projects Agency-Energy (ARPA-E), U.S. Department of Energy, under Award Number DE-AR0001211 and DE-AR0001222. The views and opinions of authors expressed herein do not necessarily state or reflect those of the United States Government or any agency thereof. This material was supported by The Research Council of Norway and Equinor ASA through Research Council project ``308817 - Digital wells for optimal production and drainage''. Research was sponsored by the United States Air Force Research Laboratory and the United States Air Force Artificial Intelligence Accelerator and was accomplished under Cooperative Agreement Number FA8750-19-2-1000. The views and conclusions contained in this document are those of the authors and should not be interpreted as representing the official policies, either expressed or implied, of the United States Air Force or the U.S. Government. The U.S. Government is authorized to reproduce and distribute reprints for Government purposes notwithstanding any copyright notation herein. The third author's masters thesis \cite{bowen24} contains some of the results presented here, as well as some additional analysis and figures.}
\end{onehalfspacing}

{ \small 
	\bibliographystyle{plain}
	\bibliography{main.bib} }

 \end{document}